\tikzset{>=latex}
\newcommand{\suchthat}{\;\ifnum\currentgrouptype=16 \middle\fi|\;}
\newtheorem{theorem}{Theorem}[section]
\newtheorem{lemma}[theorem]{Lemma}
\newtheorem{proposition}[theorem]{Proposition}
\newtheorem{corollary}[theorem]{Corollary}
\newtheorem{definition}[theorem]{Definition}
\theoremstyle{definition}
\newtheorem{remark}[theorem]{Remark}
\newcommand{\as}{{\mathbf A}}
\newcommand{\bbs}{{\mathbf B}}
\title[Extension of positive definite kernels]{On the extension of positive definite kernels to topological algebras}
\author[D. Alpay]{Daniel Alpay}
\address{(DA) 
Schmid College of Science and Technology\\
Chapman University\\
One University Drive\\
Orange, California 92866\\
USA}
\email{alpay@chapman.edu}
\author[I. L. Paiva]{Ismael L. Paiva }
\address{(ILP)
Schmid College of Science and Technology\\
Chapman University\\
One University Drive\\
Orange, California 92866\\
USA}
\email{depaiva@chapman.edu}
\thanks{Daniel Alpay thanks the Foster G. and Mary McGraw Professorship in
Mathematical Sciences, which supported this research. Ismael L. Paiva acknowledges financial support from the Science without Borders Program (CNPq/Brazil, Fund No. 234347/2014-7).}
\begin{document}

\tikzset{->-/.style={decoration={
  markings,
  mark=at position #1 with {\arrow[scale=2]{>}}},postaction={decorate}}}

\begin{abstract}
We define an extension of operator-valued positive definite functions from the real or complex setting to topological algebras, and describe their associated reproducing kernel spaces. The case of entire functions is of special interest, and we give a precise meaning to some power series expansions of analytic functions that appears in many algebras.
\end{abstract}
\maketitle


\date{today}
\setcounter{tocdepth}{1}
\tableofcontents


\section{Introduction}
\setcounter{equation}{0}

It is often of interest in some areas of mathematics to consider the extension of the domain (and, in general, the range) of analytic functions from the field of complex or real numbers, here denoted by $\mathbb{K}$, to an algebra $\mathcal A$ over $\mathbb{K}$. More precisely, if $f$ is analytic in a neighborhood $\Omega_\mathbb{K}$ of $z$, with Taylor expansion
\begin{equation}
f(z+h)=\sum_{n=0}^\infty h^n\frac{f^{(n)}(z)}{n!},
\label{power-series}
\end{equation}
where $h$ is a complex number in the open disk of convergence centered at $z$, one formally defines
\begin{equation}
\label{ext}
f(z+A)=\sum_{n=0}^\infty A^n\frac{f^{(n)}(z)}{n!},
\end{equation}
where $A\in\mathcal{A}\setminus\mathbb{K}$. Note that the restriction of $A\in\mathcal A\setminus\mathbb{K}$ is made just to keep Eq. \eqref{ext} familiar. In fact, one could consider the change $A\mapsto A+h$, where $h\in\mathbb K$ is such that $z+h\in\Omega_\mathbb{K}$.\smallskip

This type of extension is used many areas, like in supermathematics, and even in the theory of linear stochastic systems \cite{MR2610579,alp} and in the associated theory of strong algebras \cite{MR3029153,MR3404695}. Moreover, extensions given by Eq. \eqref{ext} contrasts with the one done, for instance, in the study of white noise space, where, with exception of \cite{aa_goh}, the complex coefficients of the power series (and not the variable) in Eq. \eqref{power-series} are replaced by elements that take value in the space of stochastic distributions $S_{-1}$.\smallskip

In some cases, if the functions of interest in Eq. \eqref{power-series} form a reproducing kernel Hilbert space, the extension of their kernel can be straightforward. Let, for instance, $\mathcal A$ be the Grassmann algebra with a finite number, say $N$, of generators.
In this case, a number $z\in\mathcal{A}$ can be written as
\[
z = z_B+z_S,
\]
where $z_B\in\mathbb{C}$ is called the body of $z$, and $z_S\in\mathcal{A}\setminus\mathbb{C}$ is such that $z_S^{N+1}=0$ and is called the soul of $z$. Then, one has
\begin{equation}
\mathscr{K}_N(z_B+z_S,w_B+w_S)=\begin{pmatrix}1&z_S&\frac{z_s^2}{2!}\cdots& \frac{z_S^N}{N!}\end{pmatrix}K_N(z_B,w_B)\begin{pmatrix}1\\
\overline{w_S}\\ \frac{\overline{w_S}^2}{2!}\\ \vdots \\ \frac{\overline{w_S}^N}{N!}\end{pmatrix},
\label{extkern}
\end{equation}
where $K_N$ is the $(N+1)\times (N+1)$ matrix function with $(n,m)$ entry equals to
\begin{equation}
\label{sofsof1}
\frac{1}{n!m!}\frac{\partial^{n+m} K(z,w)}{\partial^n \partial^m z^n\overline{w}^m}.
\end{equation}
Eq. \eqref{extkern} can be written in this simple form because of the nilpotence of the soul of $z$, i.e., because $z_S^{N+1}=0$.\smallskip

In general, for a Grassmann algebra with an infinite number of generators or for other arbitrary algebras, an expression similar to Eq. \eqref{extkern} is desirable. However, this requires a more careful analysis --- for instance, with the study of convergence.\smallskip

The present work focus on functions given by Eq. \eqref{ext} whenever $\mathcal A$ is a topological algebra, i.e., $\mathcal A$ is a locally convex topological vector space and the product $ab$ is separately continuous in each of the variables --- see Ref. \cite{MR0438123} for more details. Because of that, the convergence of Eq. \eqref{ext} is assumed to be in the topology of $\mathcal A$.\smallskip

Our objective is, in case the original analytic functions $f$ presented in Eq. \eqref{power-series} form a reproducing kernel Hilbert space, to introduce the structure of the corresponding space of extended functions given by Eq. \eqref{ext} and the extension of the underlying operators in this scenario.\smallskip

In a sense, our approach can be seeing as a reduction to the complex (or real) case. The reason is that, even though they are desirable, expressions like Eq. \eqref{extkern} do not seem to always exist. In general, we do not obtain a closed form for the kernel of functions $f$ presented in Eq. \eqref{ext}. Then, we study objects of the following type instead
\begin{equation}
\label{weakconv}
F(a,z,A) = \langle a, f(z+A)\rangle=\sum_{n=0}^\infty\langle a,(z+A)^n\rangle \frac{f^{(n)}(z)}{n!},
\end{equation}
where $a$ belongs to the topological dual $\mathcal{A}^\prime$. In other words, we replace the powers $(z+A)^n$ by $\langle a, (z+A)^n\rangle$ for every $n$. The exchange of order between the sum and the duality operation is justified because convergence in the topological algebra implies weak convergence.\smallskip

To introduce the underlying reproducing kernel Hilbert space, more definitions are required. However, we already remark that, for the Fock space, with reproducing kernel given by
\begin{equation}
\label{touttavie}
e^{z\overline w}=\sum_{n=0}^\infty \frac{z^n{\overline w}^n}{n!},
\end{equation}
the reproducing kernel of the space of extended functions is
\begin{equation}
\label{fock123}
\sum_{n=0}^\infty \frac{\langle a, (z+A)^n\rangle\overline{\langle b, (w+B)^n\rangle}}{n!}.
\end{equation}

In the case of $\mathbb{K}=\mathbb{C}$ and $\mathcal A=\mathbb C^{n\times n}$, the kernel in Eq. \eqref{fock123} becomes
\begin{equation}
K(a,A,z,b,B,w)={\rm Tr}\,\left((a^*\otimes b)e^{(zI_n+A)\otimes(\overline{w}I_n+ B^*)}\right),\quad a,A,b,B\in\mathbb C^{n\times n}.
\end{equation}

For yet another example, we take $\mathcal A$ to be the quaternions $\mathbb H$ and $\mathbb K$ to be the real numbers. Then, Eq. \eqref{fock123} becomes
\begin{equation}
\label{quaternionic-kernel}
\sum_{n=0}^\infty\frac{\left({\rm Re}\,\overline{a}(t+p)^n\right)\left({\rm Re}\,(\overline{q}+s)^nb\right)}{n!}.
\end{equation}
Even though the above expression is similar, it is not equivalent to the reproducing kernel of the Fock space of slice hyperholomorphic functions \cite{MR3587897}, which is
\[
\sum_{n=0}^\infty\frac{(t+p)^n(\overline{q}+s)^n}{n!}.
\]
Although the real part is taken in Eq. \eqref{quaternionic-kernel}, observe that non-real parts of the variables also play a role in the kernel. This can be seeing by varying the parameters $a$ and $b$.\smallskip

Besides this introduction, this paper contains four sections. In Section \ref{2}, we formalize our definition of reproducing kernel Hilbert spaces associated to extensions of functions of the type given by Eq. \eqref{ext}. After that, the case of entire functions is considered in Section 3. Then, the definitions and results are generalized to arbitrary analytic functions in Section 4. Finally, extensions of operators are considered in the last section.

\section{Extension of kernels to topological algebras}
\setcounter{equation}{0}
\label{2}

In this work, we assume that the algebra and its topological dual are endowed with involutions (for simplicity denoted by the same symbol) $A\mapsto A^*$ and $a\mapsto a^*$, which extend the complex conjugation, keep the algebraic structure, and satisfy
\begin{equation}
\overline{\langle a, A\rangle}=\langle a^*, A^*\rangle,\quad A\in\mathcal A\quad{\rm and}\quad a\in\mathcal A^\prime.
\label{azerty}
\end{equation}
In particular, choosing $A= cI$, where $c\in\mathbb{K}$, we have
\begin{equation}
\langle a,A^*\rangle=\overline{c}\cdot\langle a, I\rangle. 
\label{arizona}
\end{equation}

Before focusing on the case of entire functions, consider the case of a positive definite $\mathbf{B}(\ell_2(\mathbb{N}_0))$-valued kernel and $\Omega_\mathbb{K}\subset\mathbb{K}$. By definition of an operator-valued positive definite function, the $\mathbb{K}$-valued function
\begin{equation}
\mathcal{K}((z,f),(w,e))=\langle K(z,w)e,f\rangle_{\ell_2(\mathbb N_0)}
\label{alaska}
\end{equation} 
is positive definite on $\Omega_\mathbb{K}\times\ell_2(\mathbb N_0)$. Note that Eq. \eqref{alaska} can be rewritten as
\begin{equation}
\label{nebraska}
\mathcal{K}((z,f),(w,e))=\sum_{n,m=0}^\infty \overline{e_n}k_{nm}(z,w)f_m,
\end{equation}
where $(k_{nm}(z,w))_{n,m=0}^\infty$ is the matrix representation of $K(z,w)$ with respect to the standard basis of 
$\ell_2(\mathbb N_0)$, and where the elements of $\ell_2(\mathbb N_0)$ are written as semi-infinite column vectors.\smallskip

We, then, extend $\mathcal K$ --- or, more precisely, Eq. \eqref{nebraska} --- to the domain $\Omega$ in the following way:
\begin{equation}
\label{missouri}
\begin{split}  
\mathcal {K}((z,(A_n)_{n=0}^\infty,a),(w,(B_m)_{n=0}^\infty,b))&=\sum_{n,m=0}^\infty \overline{\langle a,A_n^*\rangle} k_{nm}(z,w)\langle b, B_m^*\rangle\\
&=\sum_{n,m=0}^\infty \langle a^*,A_n\rangle k_{nm}(z,w)\overline{\langle b^*, B_m\rangle}.
\end{split}
\end{equation}
In this expression, $a$ and $b$ belong to the topological dual $\mathcal{A}^\prime$ of the algebra $\mathcal{A}$ and the brackets denote the duality between $\mathcal{A}$ and $\mathcal{A}^\prime$. Moreover, $(A_n)_{n=0}^\infty$ and $(B_m)_{m=0}^\infty$ are sequences of elements indexed by $\mathbb N_0$. If the entries $k_{nm}$ are matrix-valued, say in $\mathbb C^{p\times p}$, we take the $A_n$ and the $B_m$ to be in $\mathcal{A}^{1\times p}$. The duality expressions $\langle a, A_n^*\rangle$ and $\langle b, B_m^*\rangle$ are, then, in $\mathbb C^{1\times p}$.\smallskip

The function given by Eq. \eqref{missouri} is well-defined and positive definite on the set $\Omega\equiv\Omega_\mathbb{K}\times\Omega_\mathcal{A}$, with
\[
\Omega_\mathcal{A}=\left\{(a,(A_n^*)_{n=0}^\infty)\,\,\mbox{{\rm such that}}\,\, (\langle a,A_n^*\rangle)_{n=0}^\infty\,\in\ell_2(\mathbb N_0)\right\}
\]
and is the starting point of our study.\smallskip

\begin{remark}{\rm In principle, one can replace in Eq. \eqref{nebraska} the $\mathbb{K}$ numbers $e_n$ and $f_m$ by elements in this algebra, and the complex conjugation by the conjugation in $\mathcal A$ to get the expression
\begin{equation}
\label{alabama}
M((z,(A_n)_{n=0}^\infty),(w,(B_m)_{m=0}^\infty))=\sum_{n,m=0}^\infty A_nk_{nm}(z,w)B_m^{*},
\end{equation}
where the sequences $\as=(A_n)_{n=0}^\infty$ and $\bbs=(B_m)_{m=0}^\infty$ are chosen such that Eq. \eqref{alabama} converges. When $\mathcal A$ is an algebra of Hilbert operators, or a $C^*$-algebra, one can define positivity for Eq. \eqref{alabama}, but, in general, it is not so clear in which sense \eqref{alabama} defines a positive definite function, when convergent.}
\end{remark}

For general topological algebras, we define positivity using the $\mathbb K$-valued function in Eq. \eqref{missouri}. For $(a,(A_u))\in\Omega_{\mathcal A}$, we denote by $X(a,\as)$ the $\ell_2(\mathbb N_0)$ element with $u$-component $\langle a,A_u^*\rangle$
\begin{equation}
X(a,\as)=\begin{pmatrix}\langle a , A_0^*\rangle\\ \langle a , A_1^*\rangle\\ \langle a , A_2^*\rangle\\ \vdots\end{pmatrix}.
\end{equation}
Moreover, we let
\begin{equation}
\label{blabla}  
K(z,w)=\Gamma(z)\Gamma(w)^*
\end{equation}
be a minimal factorization of the $\mathbf{B}(\ell_2(\mathbb{N}_0))$-valued kernel $K(z,w)$ via a Hilbert space $\mathcal G$, meaning that $\Gamma(z)\in\mathbf{B}(\mathcal G,\ell_2(\mathbb N_0))$ for every $z\in\Omega_\mathbb{K}$ and that the linear span of the range of the operators $\Gamma(w)^*$ is dense in $\mathcal G$, as $w$ runs through $\Omega_\mathbb{K}$. One can, for instance, choose for $\mathcal G$ the reproducing kernel Hilbert space $\mathcal H(K)$ of $\ell_2(\mathbb N_0)$-valued functions with reproducing kernel $K(z,w)$ and 
\[
(\Gamma(w))(f)=f(w),\quad f\in\mathcal H(K).
\]
Then,
\[
\Gamma^*(w)\xi=K(\cdot, w)\xi,\quad \xi\in \ell_2(\mathbb N_0)
\] 
and the next proposition is immediate:

\begin{proposition}
The factorization
\[
\begin{split}
\langle K(z,w)X(b,\bbs),X(a,\as)\rangle_{\ell_2(\mathbb N_0)}&=\langle \Gamma(w)^*X(b,\bbs), \Gamma(z)^*X(a,\as)\rangle_{\mathcal G},
\end{split}
\]
holds and the reproducing kernel Hilbert space associated to Eq. \eqref{missouri} consists of functions of the form
\begin{equation}
\label{FzAa}
F(z,\as,a)=\langle f,\Gamma(z)^*X(a,\as)\rangle_{\mathcal G},\quad f\in\mathcal G,
\end{equation}
with inner product and norm induced from the inner product and the norm of $\mathcal G$.
\end{proposition}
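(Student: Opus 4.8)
The plan is to dispose of the factorization identity first, since it is purely formal, and then to read off the reproducing kernel Hilbert space from the standard factorization characterization of such spaces, the only substantive point being a density argument that upgrades an a priori coisometry to a unitary.

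First I would substitute the minimal factorization \eqref{blabla}, namely $K(z,w)=\Gamma(z)\Gamma(w)^*$, into the left-hand side and move $\Gamma(z)$ across the $\ell_2(\mathbb N_0)$ inner product as its adjoint, obtaining
\[
\langle K(z,w)X(b,\bbs),X(a,\as)\rangle_{\ell_2(\mathbb N_0)}=\langle \Gamma(z)\Gamma(w)^*X(b,\bbs),X(a,\as)\rangle_{\ell_2(\mathbb N_0)}=\langle \Gamma(w)^*X(b,\bbs),\Gamma(z)^*X(a,\as)\rangle_{\mathcal G},
\]
which is the asserted identity. Because by \eqref{missouri} this left-hand side is exactly $\mathcal K((z,\as,a),(w,\bbs,b))$, the extended kernel is now exhibited as the Gram kernel $\mathcal K((z,\as,a),(w,\bbs,b))=\langle \phi(w,\bbs,b),\phi(z,\as,a)\rangle_{\mathcal G}$ of the explicit feature map $\phi(z,\as,a):=\Gamma(z)^*X(a,\as)\in\mathcal G$.

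Second, I would invoke the general fact that a scalar positive definite kernel presented as a Gram kernel $\mathcal K(x,y)=\langle \phi(y),\phi(x)\rangle_{\mathcal G}$ of a map $\phi$ into a Hilbert space $\mathcal G$ has reproducing kernel Hilbert space equal to the image of $\mathcal G$ under the map $f\mapsto F_f$, where $F_f(x)=\langle f,\phi(x)\rangle_{\mathcal G}$, and that this map is a coisometry of $\mathcal G$ onto $\mathcal H(\mathcal K)$. Applied to our feature map this produces precisely the functions \eqref{FzAa}, namely $F(z,\as,a)=\langle f,\Gamma(z)^*X(a,\as)\rangle_{\mathcal G}$ with $f\in\mathcal G$.

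Third, and this is the only step carrying content, I would show that the norm and inner product are genuinely those induced from $\mathcal G$, i.e. that $f\mapsto F_f$ is unitary rather than merely a coisometry; this is equivalent to the density in $\mathcal G$ of the linear span of the range of $\phi$. Here I would use the minimality hypothesis on \eqref{blabla}, that the span of $\bigcup_{w}\mathrm{ran}\,\Gamma(w)^*$ is dense in $\mathcal G$. It then suffices to observe that every $\xi\in\ell_2(\mathbb N_0)$ is realized as some $X(a,\as)$: fixing (by Hahn--Banach, since $I\neq 0$) a functional $a\in\mathcal A^\prime$ with $\langle a,I\rangle=1$ and taking $A_n^*=\xi_n I$, Eq. \eqref{arizona} gives $\langle a,A_n^*\rangle=\xi_n$, so $X(a,\as)=\xi$, and the pair lies in $\Omega_{\mathcal A}$ exactly because $\xi\in\ell_2(\mathbb N_0)$. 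Hence the range of $\phi$ contains $\{\Gamma(w)^*\xi: w\in\Omega_{\mathbb K},\ \xi\in\ell_2(\mathbb N_0)\}=\bigcup_w\mathrm{ran}\,\Gamma(w)^*$, whose span is dense, so the coisometry is injective and therefore unitary, and the norm and inner product transport exactly. The main obstacle is mild and lives entirely in this density step, which is where the interplay of the duality, the involution convention \eqref{arizona}, and the definition of $\Omega_{\mathcal A}$ is actually used; I would take care that the realization of arbitrary $\ell_2(\mathbb N_0)$ vectors is compatible with the conjugate-linear placement of the variables in \eqref{missouri}, after which the statement is indeed immediate from the general factorization lemma.
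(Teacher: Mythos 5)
Your proof is correct and takes the route the paper intends: the paper declares this proposition ``immediate'' from the minimal factorization \eqref{blabla}, and your argument is exactly that standard feature-map/factorization argument with the details written out. The one point you make explicit that the paper passes over silently --- realizing every $\xi\in\ell_2(\mathbb N_0)$ as $X(a,\as)$ (via a functional with $\langle a,I\rangle=1$ and $A_n^*=\xi_n I$) so that minimality of the factorization gives density of the span of the feature vectors $\Gamma(z)^*X(a,\as)$ and hence unitarity, not mere coisometry, of $f\mapsto F(\cdot,\cdot,\cdot)$ --- is precisely what is needed for the phrase ``inner product and norm induced from $\mathcal G$'' to be well-defined, and your treatment of it is correct.
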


We, now, introduce the matrix representation
\begin{equation}
\label{missouri2}
\Gamma(z)f=\begin{pmatrix}f_0(z)\\ f_1(z)\\ \vdots\end{pmatrix}
\end{equation}
of $\Gamma(z)$, and we associate to Eq. \eqref{FzAa} the $\mathcal A$-valued function
\begin{equation}
F(z,\as)=\sum_{n=0}^\infty A_nf_n(z).
\label{california}
\end{equation}
\begin{proposition}  
The series \eqref{california} is weakly convergent on the set of sequences $\mathbf A=(A_u)$ such that $X(a,\mathbf A)\in\ell_2(\mathbb N_0)$ for all $a\in\mathcal A^\prime$.
\end{proposition}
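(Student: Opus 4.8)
The plan is to reduce weak convergence of the $\mathcal A$-valued series \eqref{california} to ordinary (in fact absolute) convergence of a scalar series, and then to recognize the latter as a pairing of two $\ell_2(\mathbb N_0)$ sequences to which Cauchy--Schwarz applies. By definition, weak convergence of $F(z,\as)=\sum_{n=0}^\infty A_nf_n(z)$ means that, for every $a\in\mathcal A^\prime$, the scalar series obtained by applying the duality $\langle a,\cdot\rangle$ converges. Since each $f_n(z)\in\mathbb K$ is a scalar and the pairing is linear in its $\mathcal A$-argument, this scalar series is exactly
\[
\sum_{n=0}^\infty \langle a,A_n\rangle f_n(z).
\]
Thus the whole proposition comes down to showing that this series converges for each fixed $z\in\Omega_\mathbb K$ and each $a\in\mathcal A^\prime$.

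First I would identify the two sequences whose termwise product appears. On one side, the hypothesis $X(a,\as)\in\ell_2(\mathbb N_0)$ for all $a$ says precisely that $(\langle a,A_n^*\rangle)_{n\ge0}\in\ell_2(\mathbb N_0)$. To convert this into control on $\langle a,A_n\rangle$ --- which is what occurs in the series above --- I would invoke the compatibility \eqref{azerty} between the involution and the duality, namely $\overline{\langle a,A_n\rangle}=\langle a^*,A_n^*\rangle$. Since $a^*$ again lies in $\mathcal A^\prime$, the hypothesis applied to $a^*$ gives $(\langle a^*,A_n^*\rangle)_{n\ge0}=X(a^*,\as)\in\ell_2(\mathbb N_0)$; because $|\langle a,A_n\rangle|=|\langle a^*,A_n^*\rangle|$, it follows that $(\langle a,A_n\rangle)_{n\ge0}\in\ell_2(\mathbb N_0)$ as well.

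On the other side, the coefficients $f_n(z)$ are, by \eqref{missouri2}, exactly the components of $\Gamma(z)f\in\ell_2(\mathbb N_0)$; since $\Gamma(z)\in\mathbf B(\mathcal G,\ell_2(\mathbb N_0))$ is bounded and $f\in\mathcal G$, the sequence $(f_n(z))_{n\ge0}$ lies in $\ell_2(\mathbb N_0)$, with norm at most $\|\Gamma(z)\|\,\|f\|_{\mathcal G}$. Applying the Cauchy--Schwarz inequality in $\ell_2(\mathbb N_0)$ to $(\langle a,A_n\rangle)_n$ and $(f_n(z))_n$ then yields
\[
\sum_{n=0}^\infty\bigl|\langle a,A_n\rangle f_n(z)\bigr|\le\bigl\|X(a^*,\as)\bigr\|_{\ell_2}\,\bigl\|\Gamma(z)f\bigr\|_{\ell_2}<\infty,
\]
so the scalar series converges absolutely, and the weak convergence of \eqref{california} follows.

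The argument is short, and I do not expect a genuine obstacle; the one point requiring care is purely bookkeeping, namely that the summability hypothesis is phrased through $A_n^*$ while the series \eqref{california} and its weak pairing involve $A_n$. The identity \eqref{azerty}, together with the fact that the involution maps $\mathcal A^\prime$ into itself, is exactly what bridges this gap, and it is the only structural input beyond boundedness of $\Gamma(z)$ and Cauchy--Schwarz. If one additionally wants the weak limit to be an element of $\mathcal A$ rather than merely a convergent scalar series against each $a$, this is obtained once $\mathcal A$ is assumed weakly sequentially complete; otherwise the statement is understood, as elsewhere in the paper, as convergence of the partial sums against every $a\in\mathcal A^\prime$.
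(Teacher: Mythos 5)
Your proof is correct and follows essentially the same route as the paper's: both reduce weak convergence to the convergence of the $\ell_2(\mathbb N_0)$ pairing between $(f_n(z))_n=\Gamma(z)f$ and the sequence coming from $X(\cdot,\as)$, using the involution identity \eqref{azerty} to pass between $A_n$ and $A_n^*$ (the paper pairs partial sums against $a^*$, you apply the hypothesis at $a^*$ --- the same move read in opposite directions). The only differences are presentational: you make the Cauchy--Schwarz bound explicit where the paper leaves it implicit in the $\ell_2$ inner product, and your closing remark about weak sequential completeness matches the paper's implicit reading of ``weakly convergent'' as convergence of the pairings of partial sums.
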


\begin{proof}
Starting from \eqref{FzAa}, we have:
\[
\begin{split}
\langle f,\Gamma(z)^*X(a,\as)\rangle_{\mathcal G}&=\langle \Gamma(z)f,X(a,\as)\rangle_{\ell_2(\mathbb N_0)}\\
&=\sum_{n=0}^\infty\langle a^*, A_n\rangle f_n(z)\\
&=\lim_{N\rightarrow\infty}\langle a^*,\sum_{n=0}^N A_nf_n(z)\rangle.
\end{split}
\]
\end{proof}

We are interested in the special case $f_n(z)=\dfrac{f^{(n)}(z)}{n!}$ and $A_n=(A^*)^n$. We, then, have the condition
\begin{equation}
\label{ell2ell2}
\sum_{n=0}^\infty\Big|\langle a^*,A^n\rangle\Big|^2<\infty
\end{equation}
to insure that $X(a,A)\in\ell_2(\mathbb N)$.\smallskip

In the next section, after introducing our approach for a general topological algebra, we explore two cases of special interest: strong algebras and Banach algebras.


\section{Analytic kernels for entire functions}
\setcounter{equation}{0}

Let $K(z,w)$ be a $\mathbb K^{p\times p}$-valued kernel, positive definite for $z,w\in\Omega_\mathbb{K}\subset\mathbb K$, and analytic in the variables $z$ and $\overline{w}$. Also, let $\mathcal H(K)$ denote the associated reproducing kernel Hilbert space with reproducing kernel $K$. Recall, see Ref. \cite{donoghue}, that the elements of $\mathcal H(K)$ are, then, analytic in $\Omega_\mathbb{K}$ and that, for every $n\in\mathbb N_0$, $w\in\Omega_\mathbb{K}$, and $\eta\in\mathbb K^p$, the function
\begin{equation}
D_{n,w}\eta\,:\,z\,\mapsto\,\frac{1}{n!}\frac{\partial^n K(z,w)\eta}{\partial\overline{w}^n}\,\,\in\,\,\mathcal H(K).
\end{equation}
Furthermore,
\begin{equation}
\langle f,D_{n,w}\eta\rangle_{\mathcal H(K)}=\frac{\eta^*f^{(n)}(w)}{n!},\quad\forall f\in\mathcal H(K).
\label{marseille}
\end{equation}
In particular,
\begin{equation}
\label{facto123321}
\langle D_{m,w}\xi\, ,\, D_{n,z}\eta\rangle_{\mathcal H(K)}=\frac{1}{m!n!}\eta^*\frac{\partial^{n+m}K(z,w)}{\partial z^n\partial\overline{w}^m}\xi=\eta^*\mathscr{K}_{n,m}(z,w)\xi,
\end{equation}
where $z,w\in \Omega_\mathbb{K}$ and $\xi,\eta\in\mathbb K^p$, and $\mathscr K_{n,m}(z,w)$ has been defined in Eq. \eqref{sofsof1}.\smallskip

We denote as vectors
\begin{equation}
\begin{pmatrix}
\xi_0 \\ \xi_1 \\ \xi_2\\ \vdots\end{pmatrix}
\end{equation}
the elements of  $\ell_2(\mathbb N_0,\mathbb K^p)$, i.e., the sequences of elements of $\mathbb K^p$ such that $\sum_{n=0}^\infty\|\xi_n\|^2<\infty$. Also, we let $f$ be a $\mathbb C^p$-valued function analytic in $\Omega_\mathbb{K}\subset\mathbb K$. Then,
\begin{equation}
\label{mathscrK}
J_z(f)\stackrel{\rm def.}{=}\begin{pmatrix}f(z)\vspace{1mm}
\\ f^{(1)}(z)\vspace{1mm}
\\ \frac{f^{(2)}(z)}{2!}\vspace{1mm}\\ \frac{f^{(3)}(z)}{3!}\\ \vdots \end{pmatrix},\quad z\in\Omega_\mathbb{K},
\end{equation}
is called the jet function generated by $f$ --- see Ref. \cite[p. 222]{MR2063356}. We denote by $J(f)$ the function
\begin{equation}
\label{Jf456}
z\,\mapsto\, J_z(f).
\end{equation} 

Now, we focus on the case of entire functions, i.e., $\Omega_\mathbb{K}=\mathbb K$.

\begin{lemma}
Let $K(z,w)$ be a $\mathbb K^{p\times p}$-valued positive definite kernel, entire in $z$ and $\overline{w}$, with associated reproducing kernel Hilbert space $\mathcal H(K)$. Also, let $f\in\mathcal H(K)$. Then, for all $z\in\mathbb K$, the operator $J_z\in\mathbf B(\mathcal H(K),\ell_2(\mathbb N_0,\mathbb K^p))$, and its adjoint is given by
\begin{equation}
J_z^*(u)=\sum_{n=0}^\infty \left(\frac{1}{n!}\frac{\partial^n K(\cdot,w)}{\partial\overline{w}^n}\Big|_{w=z}\right) u_n.
\end{equation}
\end{lemma}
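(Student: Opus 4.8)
The plan is to treat the two assertions separately: first that $J_z$ maps into $\ell_2(\mathbb N_0,\mathbb K^p)$ boundedly, and then that its adjoint has the stated form. Throughout I will express everything about $J_z$ through the functions $D_{n,z}\eta$, using the reproducing identity \eqref{marseille} and the Gram computation \eqref{facto123321}, which already encode $J_z$ entirely within $\mathcal H(K)$.

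\textbf{Boundedness.} The $n$-th block of $J_zf$ is $f^{(n)}(z)/n!$, and pairing with a basis vector $e_j\in\mathbb K^p$ gives, by \eqref{marseille}, the identity $e_j^*f^{(n)}(z)/n!=\langle f,D_{n,z}e_j\rangle_{\mathcal H(K)}$. Hence
\[
\|J_zf\|^2_{\ell_2}=\sum_{n=0}^\infty\sum_{j=1}^p\big|\langle f,D_{n,z}e_j\rangle_{\mathcal H(K)}\big|^2,
\]
so $J_z$ is exactly the analysis operator of the family $\{D_{n,z}e_j\}_{n,j}$. By Cauchy--Schwarz it therefore suffices to show this family is Bessel, and I will in fact prove the stronger
\[
\sum_{n=0}^\infty\operatorname{Tr}\mathscr K_{n,n}(z,z)=\sum_{n=0}^\infty\sum_{j=1}^p\|D_{n,z}e_j\|^2_{\mathcal H(K)}<\infty,
\]
the equality being \eqref{facto123321} with $m=n$, $w=z$, $\xi=\eta=e_j$. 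Granting this, $\|J_zf\|_{\ell_2}\le C_z\|f\|_{\mathcal H(K)}$ with $C_z^2$ the value of that sum, which is the asserted boundedness.

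\textbf{The key estimate.} The summability of $\operatorname{Tr}\mathscr K_{n,n}(z,z)$ is where entireness enters, and I expect it to be the main obstacle. By \eqref{sofsof1}, $\mathscr K_{n,n}(z,z)=\frac{1}{(n!)^2}\partial_z^n\partial_{\overline w}^n K(z,w)\big|_{w=z}$. Since $K$ is entire in $z$ and in $\overline w$, the Cauchy estimates on circles of radii $R$ and $S$ about $z$ bound the entries of this matrix by $M(z,R,S)(RS)^{-n}$, whence $\operatorname{Tr}\mathscr K_{n,n}(z,z)\le p\,M(z,R,S)(RS)^{-n}$. Because the radius of convergence is infinite one may fix $R,S$ with $RS>1$, and then $\sum_n(RS)^{-n}<\infty$ gives the claim. (For $\mathbb K=\mathbb R$ one first passes to the entire extension of $K$ in the two complexified variables and applies the same bound; the real-analytic elements of $\mathcal H(K)$ inherit it.) An equivalent route avoiding the two-variable estimate is to apply the one-variable Cauchy bound $\|f^{(n)}(z)/n!\|\le M_f R^{-n}$ to each fixed entire $f\in\mathcal H(K)$, which already gives $J_zf\in\ell_2$ for every $f$, and then to invoke the closed graph theorem: convergence in $\mathcal H(K)$ forces convergence of all derivatives at $z$ through the bounded functionals \eqref{marseille}, so the graph of $J_z$ is closed.

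\textbf{The adjoint.} Once $J_z$ is bounded, so is $J_z^*$. For the canonical vector $\delta_n\otimes e_j\in\ell_2(\mathbb N_0,\mathbb K^p)$ (equal to $e_j$ in block $n$ and $0$ elsewhere), \eqref{marseille} gives, for every $f\in\mathcal H(K)$,
\[
\langle f,J_z^*(\delta_n\otimes e_j)\rangle_{\mathcal H(K)}=\langle J_zf,\delta_n\otimes e_j\rangle_{\ell_2}=e_j^*\frac{f^{(n)}(z)}{n!}=\langle f,D_{n,z}e_j\rangle_{\mathcal H(K)},
\]
so $J_z^*(\delta_n\otimes e_j)=D_{n,z}e_j$. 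For a general $u=(u_n)_n\in\ell_2(\mathbb N_0,\mathbb K^p)$, linearity shows that $J_z^*$ applied to the truncation $\sum_{n\le N}\delta_n\otimes u_n$ equals $\sum_{n\le N}D_{n,z}u_n$; since the truncations converge to $u$ in $\ell_2$ and $J_z^*$ is continuous, these partial sums converge in $\mathcal H(K)$ to $J_z^*u$. Recalling that $D_{n,z}u_n=\frac{1}{n!}\frac{\partial^n K(\cdot,w)}{\partial\overline w^n}\big|_{w=z}u_n$ yields exactly the stated series for $J_z^*(u)$, with convergence in $\mathcal H(K)$.
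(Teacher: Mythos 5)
Your proof is correct, but your main route to boundedness is genuinely different from the paper's. The paper argues per element: since each $f\in\mathcal H(K)$ is entire, the Taylor series $f(z+1)=\sum_n f^{(n)}(z)/n!$ converges, so the terms $f^{(n)}(z)/n!$ are square-summable and $J_zf\in\ell_2(\mathbb N_0,\mathbb K^p)$; the paper then passes directly to the adjoint computation, leaving operator boundedness implicit --- to make that step rigorous one needs exactly the closed-graph argument you sketch as your alternative route, since pointwise membership in $\ell_2$ gives no uniform bound by itself. You instead work at the level of the kernel: two-variable Cauchy estimates give $\operatorname{Tr}\mathscr K_{n,n}(z,z)\le p\,M(z,R,S)(RS)^{-n}$, hence $\sum_{n,j}\|D_{n,z}e_j\|^2_{\mathcal H(K)}<\infty$, so $J_z$ is bounded (indeed Hilbert--Schmidt) with an explicit constant. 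This is stronger and more self-contained than what the lemma's own proof establishes, and it anticipates the device the paper only deploys in the theorem that follows the lemma, where boundedness of $\mathscr K(z,w)$ on $\ell_2$ is extracted from the convergent two-variable expansion $K(z+M,w+M)=\sum_{n,m}\mathscr K_{n,m}(z,w)M^{n+m}$. For the adjoint, both you and the paper rest on \eqref{marseille}; the paper interchanges the infinite sum and the inner product formally, whereas your evaluation on the vectors $\delta_n\otimes e_j$ followed by truncation and continuity of $J_z^*$ justifies that interchange and yields norm convergence of $\sum_n D_{n,z}u_n$ in $\mathcal H(K)$, a point the paper glosses over. In short: your argument proves everything the lemma asserts, with a sharper conclusion, at the cost of invoking kernel-level Cauchy estimates that the paper's more elementary per-element argument avoids.
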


\begin{proof}
The elements of $\mathcal H(K)$ are entire, and so for every $z\in\mathbb K$, the series
\[
f(z+1)=\sum_{n=0}^\infty 1^n\frac{f^{(n)}(z)}{n!}
\]
converges in norm, which implies that
\[
\sum_{n=0}^\infty\|\frac{f^{(n)}(z)}{n!}\|^2<\infty.
\]
The computation of $J_z^*$ goes as follows: with $u=(u_n)_{n=0}^\infty\in\ell_2(\mathbb N_0,\mathbb K^p)$ and $f\in\mathcal H(K)$,
\[
\begin{split}
\langle J_z(f),u\rangle_{\ell_2(\mathbb N_0)}&=\sum_{n=0}^\infty u_n^*\frac{f^{(n)}(z)}{n!}\\
&=\sum_{n=0}^\infty \langle f, D_{n,z}u_n\rangle_{\mathcal H(K)}\\
&=\langle f, J_z^*(u)\rangle_{\ell_2(\mathbb N_0)}.
\end{split}
\]

\end{proof}
\begin{theorem}
Let $K(z,w)$ be a $\mathbb K^{p\times p}$-valued function, entire in $z$ and $\overline{w}$.
Then:\smallskip
$(1)$ For every pair $(z,w)\in\mathbb K^2$, the semi-infinite block matrix
\begin{equation}
\mathscr{K}_{n,m}(z,w)=\frac{1}{m!n!}\frac{\partial^{n+m}K(z,w)}{\partial z^n\partial\overline{w}^m},\quad n,m=0,1,\ldots
\end{equation}
defines a bounded operator, which is denoted by $\mathscr{K}(z,w)$, from $\ell_2(\mathbb N_0,\mathbb K^p)$ into itself.\smallskip

$(2)$ The operator $\mathscr{K}(w,w)$ is Hermitian if the kernel $K(z,w)$ is Hermitian.\smallskip

$(3)$ The $\mathbf B(\ell_2,\ell_2)$-valued function $\mathscr{K}(z,w)$ is positive definite in $\mathbb{K}$ if the kernel $K(z,w)$ is positive definite in $\mathbb{K}$.

$(4)$ Assume the kernel $K(z,w)$ is positive definite in $\mathbb{K}$. Then,
\begin{equation}
\mathscr K(z,w)=J_zJ^*_w.
\end{equation}
\end{theorem}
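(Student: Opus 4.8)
The plan is to treat the four assertions in the order $(1)$, $(4)$, $(3)$, $(2)$, because $(3)$ follows formally from the factorization established in $(4)$, whereas $(1)$ and $(2)$ each need a self-contained argument: they are asserted under weaker hypotheses (mere entirety, respectively the Hermitian property) than the positive definiteness that makes the reproducing kernel space $\mathcal H(K)$, and hence the Lemma, available.

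For $(1)$ I would argue directly, without invoking any reproducing kernel space, by showing that $\mathscr K(z,w)$ is Hilbert--Schmidt. Fix $(z,w)$ and expand $K$ in a double Taylor series: since $K$ is entire in the first variable and in the conjugate of the second, for every $h,k\in\mathbb C$ one has
\[
K(z+h,w+k)=\sum_{n,m=0}^\infty \mathscr K_{n,m}(z,w)\,h^n\overline{k}^m,
\]
the coefficient of $h^n\overline k^m$ being exactly $\tfrac1{n!m!}\partial_z^n\partial_{\overline w}^mK(z,w)=\mathscr K_{n,m}(z,w)$. Regarding $g:=h$ and $\lambda:=\overline k$ as two independent variables ranging over all of $\mathbb C$, this exhibits $(g,\lambda)\mapsto\sum_{n,m}\mathscr K_{n,m}(z,w)g^n\lambda^m$ as an entire $\mathbb K^{p\times p}$-valued function on $\mathbb C^2$. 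Cauchy's estimates on a polydisc of radius $R$ then give $\|\mathscr K_{n,m}(z,w)\|\le M_R R^{-(n+m)}$ with $M_R=\max_{|g|=|\lambda|=R}\|G(g,\lambda)\|$, and choosing any $R>1$ yields
\[
\sum_{n,m=0}^\infty\|\mathscr K_{n,m}(z,w)\|^2\le M_R^2\sum_{n,m=0}^\infty R^{-2(n+m)}<\infty .
\]
Since each block is a fixed $p\times p$ matrix, finiteness of this sum shows that $\mathscr K(z,w)$ is Hilbert--Schmidt, hence bounded, on $\ell_2(\mathbb N_0,\mathbb K^p)$.

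Under positive definiteness I would then prove $(4)$ by a block-by-block computation. Writing $\delta_m\xi$ for the element of $\ell_2(\mathbb N_0,\mathbb K^p)$ carrying $\xi\in\mathbb K^p$ in slot $m$ and zero elsewhere, the Lemma gives $J_w^*(\delta_m\xi)=D_{m,w}\xi$ and $J_z^*(\delta_n\eta)=D_{n,z}\eta$, so that
\[
\eta^*\,[J_zJ_w^*]_{n,m}\,\xi=\langle J_w^*(\delta_m\xi),J_z^*(\delta_n\eta)\rangle_{\mathcal H(K)}=\langle D_{m,w}\xi,D_{n,z}\eta\rangle_{\mathcal H(K)}=\eta^*\mathscr K_{n,m}(z,w)\xi
\]
by Eq. \eqref{facto123321}, whence $\mathscr K(z,w)=J_zJ_w^*$, the boundedness of the factors being supplied by the Lemma. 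Assertion $(3)$ is then immediate: for any finite family $z_1,\dots,z_N$ and $u_1,\dots,u_N\in\ell_2(\mathbb N_0,\mathbb K^p)$,
\[
\sum_{i,j=1}^N\langle\mathscr K(z_i,z_j)u_j,u_i\rangle=\Big\|\sum_{i=1}^N J_{z_i}^*u_i\Big\|_{\mathcal H(K)}^2\ge 0 .
\]

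Finally, $(2)$ I would establish directly from the Hermitian symmetry $K(z,w)=K(w,z)^*$, which at the level of entries reads $K_{ij}(z,w)=\overline{K_{ji}(w,z)}$. Differentiating and using the Wirtinger identities $\partial_z\overline{g}=\overline{\partial_{\overline z}g}$ and $\partial_{\overline w}\overline g=\overline{\partial_w g}$, then evaluating at $z=w$, gives $\mathscr K_{n,m}(w,w)_{ij}=\overline{\mathscr K_{m,n}(w,w)_{ji}}$, i.e.\ $\mathscr K_{n,m}(w,w)=\mathscr K_{m,n}(w,w)^*$, which is exactly the statement that the block operator $\mathscr K(w,w)$ equals its adjoint. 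I expect the main obstacle to be part $(1)$: it is the only assertion that cannot be read off from the factorization and genuinely uses the two-variable entire structure, so the care lies in justifying the double Taylor expansion and extracting from it the square-summability of the blocks; parts $(2)$--$(4)$ are then essentially bookkeeping on top of the Lemma and Eq. \eqref{facto123321}.
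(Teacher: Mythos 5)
Your proposal is correct, and it differs from the paper in two ways worth noting. For part (1) both arguments rest on the same geometric decay of the Taylor blocks, but you obtain it from Cauchy's estimates on a polydisc and then sum the squares to conclude that $\mathscr K(z,w)$ is Hilbert--Schmidt, whereas the paper observes that the terms of the convergent expansion $K(z+M,w+M)=\sum_{n,m}\mathscr K_{n,m}(z,w)M^{n+m}$ must be bounded, so that $\|\mathscr K_{n,m}(z,w)\|\le C\,M^{-(n+m)}$, and then bounds the bilinear form directly by Cauchy--Schwarz, getting
\[
\Big|\sum_{n,m=0}^\infty u_n^*\mathscr K_{n,m}(z,w)v_m\Big|\le \frac{C}{1-\frac{1}{M^2}}\|u\|\cdot\|v\|
\]
for $M>1$. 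Your conclusion is strictly stronger (Hilbert--Schmidt, hence compact), at the modest cost of invoking the jointly entire two-variable structure rather than only the one-parameter expansion along $h=k=M$. The second difference is coverage: the paper's written proof ends with part (1) and never argues (2), (3), (4), evidently regarding them as immediate, while you supply complete arguments --- the block-by-block identification $\eta^*[J_zJ_w^*]_{n,m}\xi=\langle D_{m,w}\xi,D_{n,z}\eta\rangle_{\mathcal H(K)}=\eta^*\mathscr K_{n,m}(z,w)\xi$ via the Lemma and Eq. \eqref{facto123321}, positivity as $\big\|\sum_i J_{z_i}^*u_i\big\|_{\mathcal H(K)}^2\ge 0$, and Hermitian symmetry of the blocks by Wirtinger calculus. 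Your ordering $(1),(4),(3),(2)$, and in particular your observation that $(1)$ and $(2)$ must be proved without recourse to $\mathcal H(K)$ or the Lemma (both of which presuppose positive definiteness), makes the logical dependencies more transparent than the paper does.
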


\begin{proof} Let $(z,w)\in\mathbb K^2$. The power series expansion
\begin{equation}
K(z+M,w+M)=\sum_{n,m=0}^\infty \mathscr K_{n,m}(z,w)M^{n+m}
\end{equation}
converges for every $M>0$, and, in particular, there is a positive number $C=C(z,w,M)$ such that
\begin{equation}
|\mathscr{K}_{n,m}(z,w)M^{n+m}|\le C<\infty,\quad \forall n,m=0,1,\ldots
\end{equation}
Now, let $u=(u_n)_{n=0}^\infty$ and $v=(v_n)_{n=0}^\infty$ be two sequences in $\ell_2(\mathbb N_0,\mathbb K^p)$. Then, for $M>1$, and using the Cauchy-Schwartz inequality,
\[
\|\sum_{n=0}^\infty \frac{u_n}{M^n}\|\le\left(\sum_{n=0}^\infty\|u_n\|^2\right)^{1/2}\left(\sum_{n=0}^\infty M^{-2n}\right)^{1/2}=\frac{\|u\|}{\sqrt{1-\frac{1}{M^2}}}.
\]
Also, a similar result holds for $v$. Then, we have
\[
|u_n^*\mathscr{K}_{n,m}(z,w)v_m|\le\frac{\|u_n\|}{M^n}C(z,w,M)\frac{\|v_m\|}{M^m}.
\]
Finally,
\[
|\sum_{n,m=0}^\infty u_n^*\mathscr{K}_{n,m}(z,w)v_m|\le\frac{C}{1-\frac{1}{M^2}}\|u\|\cdot\|v\|.
\]
\end{proof}

As a consequence of the previous theorem, we can write:

\begin{corollary}
Let $\eta=(\eta_m)_{m=0}^\infty\in\ell_2(\mathbb N_0,\mathbb K^p)$ and $w\in\mathbb K$.
Then,
\begin{equation}
\sum_{m=0}^\infty D_{m,w}\eta_m \,\,\in\,\,\mathcal H(K)
\end{equation}
and
\begin{equation}
\label{groningen}
\mathscr{K}(z,w)\eta=\begin{pmatrix}(\sum_{m=0}^\infty D_{m,w}\eta_m)(z) \\ 
(\sum_{m=0}^\infty D_{m,w}\eta_m)^{(1)}(z)\\ \frac{1}{2!}(\sum_{m=0}^\infty D_{m,w}\eta_m)^{(2)}(z)\\ \vdots \end{pmatrix}.
\end{equation}
\end{corollary}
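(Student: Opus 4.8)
The plan is to read off both assertions from the Lemma and the Theorem, after identifying the series $\sum_{m=0}^\infty D_{m,w}\eta_m$ with the adjoint operator $J_w^*$ evaluated at $\eta$. Indeed, the formula for $J_z^*$ established in the Lemma, read at the point $w$, states precisely that $J_w^*(\eta)=\sum_{m=0}^\infty D_{m,w}\eta_m$, since $D_{m,w}\eta_m$ is the function $\big(\tfrac1{m!}\partial^m K(\cdot,w)/\partial\overline w^m\big)\eta_m$. The corollary is then nothing but the statement that $J_w^*(\eta)$ lies in $\mathcal H(K)$, together with the componentwise description of $J_zJ_w^*\eta$.

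For the first assertion I would argue by continuity. For a finitely supported sequence $u$, the expression $J_w^*(u)=\sum_m D_{m,w}u_m$ is a genuine finite sum and hence manifestly an element of $\mathcal H(K)$. Since $J_w\in\mathbf B(\mathcal H(K),\ell_2(\mathbb N_0,\mathbb K^p))$ by the Lemma, its adjoint $J_w^*$ is a bounded operator from $\ell_2(\mathbb N_0,\mathbb K^p)$ into $\mathcal H(K)$; writing $\eta^{(N)}$ for the truncation of $\eta$ to its first $N+1$ entries, the finite partial sums $\sum_{m=0}^N D_{m,w}\eta_m=J_w^*(\eta^{(N)})$ therefore converge in $\mathcal H(K)$ to $J_w^*(\eta)$ as $N\to\infty$, because $\eta^{(N)}\to\eta$ in $\ell_2(\mathbb N_0,\mathbb K^p)$. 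This gives $\sum_{m=0}^\infty D_{m,w}\eta_m=J_w^*(\eta)\in\mathcal H(K)$. If one prefers an explicit Cauchy estimate pinpointing where the Theorem enters, the factorization \eqref{facto123321} yields
\[
\Big\|\sum_{n=N}^M D_{n,w}\eta_n\Big\|_{\mathcal H(K)}^2=\sum_{n,m=N}^M \eta_n^*\,\mathscr K_{n,m}(w,w)\,\eta_m\le\|\mathscr K(w,w)\|\,\sum_{n=N}^M\|\eta_n\|^2,
\]
whose right-hand side tends to $0$ by the boundedness of $\mathscr K(w,w)$ from the Theorem together with $\eta\in\ell_2(\mathbb N_0,\mathbb K^p)$.

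For the identity \eqref{groningen} I would invoke part $(4)$ of the Theorem, namely $\mathscr K(z,w)=J_zJ_w^*$. Setting $g:=J_w^*(\eta)=\sum_{m=0}^\infty D_{m,w}\eta_m\in\mathcal H(K)$, this gives $\mathscr K(z,w)\eta=J_z(g)$, and the definition \eqref{mathscrK} of the jet operator identifies the $n$-th block entry of $J_z(g)$ as $\tfrac1{n!}g^{(n)}(z)$, which is exactly the $n$-th entry on the right-hand side of \eqref{groningen}.

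The only genuine point of care — and the step I expect to be the mildest obstacle — is the interchange of the $n$-fold differentiation at $z$ with the infinite sum over $m$, i.e. the equality $\big(\sum_m D_{m,w}\eta_m\big)^{(n)}(z)=\sum_m \big(D_{m,w}\eta_m\big)^{(n)}(z)$ implicit in passing from $J_z(g)$ to the entries of \eqref{groningen}. This is justified by the continuity of $J_z$: one has $J_z(g)=\lim_{N}J_z\big(\sum_{m=0}^N D_{m,w}\eta_m\big)$ in $\ell_2(\mathbb N_0,\mathbb K^p)$, each finite partial sum commutes with $J_z$ trivially, and convergence in $\ell_2$ forces convergence in each component. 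Everything else is bookkeeping layered on top of the Lemma and the Theorem.
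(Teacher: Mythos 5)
Your proof is correct, but it is organized around different ingredients than the paper's proof, and the difference is worth spelling out. The paper works directly from the reproducing-kernel identities: the first claim is read off from the single identity $\langle\eta,\mathscr K(w,w)\eta\rangle_{\ell_2(\mathbb N_0)}=\|\sum_{m=0}^\infty D_{m,w}\eta_m\|^2_{\mathcal H(K)}$, whose implicit Cauchy-sequence justification is exactly the displayed estimate you give as a fallback; the second claim is obtained by computing $\frac{1}{n!}\bigl(\sum_m D_{m,w}\eta_m\bigr)^{(n)}(z)$ via the derivative-reproducing property \eqref{marseille} and identifying the result with the $n$-th entry of $\mathscr K(z,w)\eta$ via \eqref{facto123321}. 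You instead route everything through the jet operator: the first claim via boundedness of $J_w^*$ (from the Lemma) plus truncation, the second via part $(4)$ of the Theorem, $\mathscr K(z,w)=J_zJ_w^*$. Both routes are legitimate, but note that the paper never actually proves part $(4)$ (the Theorem's proof establishes only part $(1)$), and \eqref{groningen} is essentially part $(4)$ written out componentwise; so invoking part $(4)$ makes the corollary nearly tautological while deferring the substantive computation to an assertion whose justification the paper omits. This is logically admissible, since the corollary is stated as a consequence of the Theorem, but the real work in your write-up is done by the continuity-of-$J_z$ argument together with \eqref{facto123321} applied to partial sums --- which, incidentally, reconstructs precisely the computation the paper performs, and which makes your proof self-contained without part $(4)$. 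One small correction: that interchange of differentiation and summation is not actually needed to pass from $J_z(g)$ to the right-hand side of \eqref{groningen}, because the entries there are the (normalized) derivatives of the full sum $g$, exactly as in the definition \eqref{mathscrK} of $J_z$; the interchange only matters if one wants to identify those entries with $\sum_m\mathscr K_{n,m}(z,w)\eta_m$, i.e., to prove part $(4)$ itself. In short: your packaging buys a cleaner operator-theoretic statement, while the paper's computation simultaneously supplies the argument that part $(4)$ rests on.
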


\begin{proof}
The first claim comes from 
\[
\langle \eta,\mathscr K(w,w)\eta\rangle_{\ell_2(\mathbb N_0)}=\|\sum_{m=0}^\infty D_{m,w}\eta_m\|_{\mathcal H(K)}^2.
\]
By Eq. \eqref{marseille},
\[
\frac{1}{n!}\left(\sum_{m=0}^\infty D_{m,w}\eta_m \right)^{(n)}=\langle \sum_{m=0}^\infty D_{m,w}\eta_m\, ,\, D_{n,z}\rangle_{\mathcal H(K)}.
\] 
However, this is the $n$-th entry of $\mathscr K(z,w)\eta$, as can be seen from Eq. \eqref{facto123321}.
\end{proof}

\begin{theorem}
Assume $K(z,w)$ is a $\mathbb K^{p\times p}$-valued positive definite function in $\mathbb K$ and entire in the variables $z$ and $\overline{w}$. The reproducing kernel Hilbert space associated to $\mathscr{K}(z,w)$ is the space of jet functions generated by the elements of $\mathcal H(K)$,
with inner product
\begin{equation}
\langle J(f),J(g)\rangle_{\mathcal H(\mathscr{K})}=\langle f,g\rangle_{\mathcal H(K)}.
\end{equation}
\end{theorem}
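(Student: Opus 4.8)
The plan is to read off the reproducing kernel Hilbert space directly from the factorization $\mathscr{K}(z,w)=J_zJ_w^*$ established in part $(4)$ of the preceding theorem, using the standard correspondence between factorizations of a positive definite kernel and its associated Hilbert space (the same minimal-factorization principle already invoked around Eq. \eqref{blabla}). Recall that if a $\mathbf{B}(\mathcal E)$-valued positive definite kernel admits a factorization $\mathscr{K}(z,w)=F(z)F(w)^*$ with $F(w)\in\mathbf B(\mathcal H_0,\mathcal E)$ for an auxiliary Hilbert space $\mathcal H_0$, then the map $T\colon h\mapsto\bigl(z\mapsto F(z)h\bigr)$ is a coisometry from $\mathcal H_0$ onto $\mathcal H(\mathscr K)$, and it is unitary precisely when the factorization is minimal, i.e. when the closed linear span of $\bigcup_w\operatorname{ran}F(w)^*$ is all of $\mathcal H_0$. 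Here I would take $\mathcal H_0=\mathcal H(K)$, $F(z)=J_z$, and $\mathcal E=\ell_2(\mathbb N_0,\mathbb K^p)$, which is legitimate because the Lemma guarantees $J_z\in\mathbf B(\mathcal H(K),\ell_2(\mathbb N_0,\mathbb K^p))$.

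First I would identify the functions in the image of $T$. For $f\in\mathcal H(K)$, the function $z\mapsto J_z(f)$ is by the definition in Eq. \eqref{mathscrK} exactly the jet function $J(f)$ of Eq. \eqref{Jf456}. Thus the elements of $\mathcal H(\mathscr K)$ are precisely the jet functions generated by the elements of $\mathcal H(K)$, as asserted in the statement.

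Next I would establish minimality, which upgrades the coisometry to a unitary and yields the isometry of inner products. The adjoint computed in the Lemma is $J_w^*(u)=\sum_{n=0}^\infty D_{n,w}u_n$. Restricting to sequences supported on the zeroth coordinate, $u=(u_0,0,0,\ldots)$, shows that $\operatorname{ran}J_w^*$ contains every $D_{0,w}u_0=K(\cdot,w)u_0$. Since the closed linear span of $\{K(\cdot,w)\eta\colon w\in\mathbb K,\ \eta\in\mathbb K^p\}$ is all of $\mathcal H(K)$ by the reproducing property of $K$, the factorization is minimal, so $T$ is unitary. Equivalently, one checks $T$ is injective directly: if $J(f)\equiv 0$ then the zeroth component $f(z)$ vanishes for every $z\in\mathbb K$, forcing $f=0$. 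Either route makes $T$ an isometric isomorphism, whence
\[
\langle J(f),J(g)\rangle_{\mathcal H(\mathscr K)}=\langle f,g\rangle_{\mathcal H(K)}.
\]

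The only point requiring genuine care, as opposed to bookkeeping, is this minimality/injectivity step that promotes the general coisometry to a unitary; everything else is a transcription of $\mathscr K=J_zJ_w^*$ together with the identification of $J_z(f)$ with the jet function. I expect the density of the span of the $K(\cdot,w)\eta$ in $\mathcal H(K)$—equivalently, that no nonzero element of $\mathcal H(K)$ is annihilated by $T$—to be the crux, and it follows at once from the reproducing property of $K$.
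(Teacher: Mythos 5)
Your proof is correct, but it takes a genuinely different route from the paper's. The paper does not invoke the factorization-plus-minimality machinery at all: it takes the jet space with the transported inner product as the candidate space, uses the Corollary (Eq.~\eqref{groningen}) to recognize each kernel column $\mathscr K(\cdot,w)\eta$ as the jet of $\sum_{m=0}^\infty D_{m,w}\eta_m\in\mathcal H(K)$, and then verifies the reproducing property directly: $\langle J(f),\mathscr K(\cdot,w)\eta\rangle_{\mathcal H(\mathscr K)}=\langle f,\sum_{m}D_{m,w}\eta_m\rangle_{\mathcal H(K)}=\sum_{m}\eta_m^*f^{(m)}(w)/m!=\langle J(f)(w),\eta\rangle_{\ell_2}$, the middle equality being Eq.~\eqref{marseille}; uniqueness of the reproducing kernel Hilbert space with a given kernel then finishes the argument. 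You instead feed the factorization $\mathscr K(z,w)=J_zJ_w^*$ (part $(4)$ of the preceding theorem) into the standard coisometry/minimal-factorization principle --- the same principle the paper uses around Eq.~\eqref{blabla} --- and reduce everything to a minimality check, which you settle via $D_{0,w}u_0=K(\cdot,w)u_0$ and density of kernel columns, or even more simply by injectivity of $J$ read off from the zeroth component. Comparing the two: your route is more modular and makes explicit a point the paper leaves implicit, namely that the transported inner product is well defined precisely because $J$ is injective; on the other hand, it leans on part $(4)$, which the paper states but whose displayed proof only establishes item $(1)$ (boundedness), so to be fully self-contained you should note that $(4)$ is exactly Eq.~\eqref{facto123321} combined with the Lemma's formula for $J_w^*$ --- which is in substance the same computation the paper channels through Eq.~\eqref{groningen}. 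Both proofs thus rest on the same two ingredients (the formula for $J_w^*$ and Eq.~\eqref{facto123321}); yours packages them through the abstract factorization lemma, the paper's through a direct verification of the reproducing property.
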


\begin{proof}
Using Eq. \eqref{groningen}, we have
\begin{equation}
\begin{split}
\langle J(f)(\cdot), \mathscr{K}(\cdot,w)\eta\rangle{\mathcal H(\mathscr{K})}&=\langle f,
\sum_{m=0}^\infty D_{m,w}\eta_m\rangle_{\mathcal H(K)}\\
&=\sum_{m=0}^\infty \eta_m^*\frac{f^{(m)}(w)}{m!}\\
&=\langle J(f)(w),\eta\rangle_{\ell_2(\mathbb N_0,\mathbb C^{p})}.
\end{split}
\end{equation}
\end{proof}

We, now, present two special cases.\\

{\bf Strong algebras:}

The notion of strong algebra was introduced in Refs. \cite{MR3029153,MR3404695}. It originated from a space of stochastic distributions defined by Y. Kondratiev\cite{MR1408433, MR592501} and a related inequality proved by V{\aa}ge \cite{MR2714906, vage96}.

\begin{definition}
An algebra $\mathcal A$ which is an inductive limit of a family of Banach spaces $\left\{\mathcal B_t\,:\, t\in T\right\}$ directed under inclusion is called a strong algebra if, for every $t\in T$, there exists $h(t)\in T$ such that, for every $s\ge h(t)$, there exists a positive constant $c_{s,t}$ such that, for every $A\in \mathcal B_t$ and $B\in\mathcal B_s$, the products $AB$ and $BA$ belong to $\mathcal B_s$ and
\begin{equation}
\|AB\|_s\le c_{s,t}\|A\|_t\cdot\|B\|_s\quad and\quad \|BA\|_s\le c_{s,t}\|A\|_t\cdot\|B\|_s.
\end{equation}
\end{definition}

Let $A\in\mathcal B_t\subset\mathcal A$, where $\mathcal A$ is a strong algebra. Also, let $s=h(t)$ and $d_t=c_{h(t),t}$. An easy induction shows that
\begin{equation}
\|A^n\|_{h(t)}\le d_t^{n-1}\|A\|_t^n,\quad n=1,2,\ldots
\end{equation}
Thus, for $a\in\mathcal A^\prime$, we have
\begin{equation}
\label{san-diego}
|\langle a, A^n\rangle|\le \|a\|^\prime\cdot d_t^{n-1}\|A\|_t^{n},\quad n=1,2,\ldots
\end{equation}
Hence, for $a\in \mathcal A^\prime$,
\begin{equation}
|\langle a,A^n\rangle|\le\frac{ \|a\|^\prime}{d_t}\left(c_t\|A\|_t\right)^n,\quad n=1,2,\ldots
\end{equation}

Therefore, the following theorem holds:
\begin{theorem}
In a strong algebra $\mathcal A$, the power series given by Eq. \eqref{ext}
\[
f(z+A)=\sum_{n=0}^\infty A^n\frac{f^{(n)}(z)}{n!}
\]
converges for every $A\in\mathcal A$. In particular, Eq. \eqref{ell2ell2} holds for all $a\in \mathcal A^\prime$ and all $A\in \mathcal A$.
\label{theorem}
\end{theorem}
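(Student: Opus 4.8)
The plan is to prove the convergence inside a single Banach space of the inductive family and then transport it to the inductive-limit topology of $\mathcal A$. Fix $A\in\mathcal A$. Since $\mathcal A$ is the inductive limit of the family $\{\mathcal B_t\}$, there is $t\in T$ with $A\in\mathcal B_t$; put $s=h(t)$ and $d_t=c_{h(t),t}$, so that the estimate already recorded, $\|A^n\|_{h(t)}\le d_t^{n-1}\|A\|_t^n$, is available for all $n\ge 1$. The inclusion $\mathcal B_{h(t)}\hookrightarrow\mathcal A$ is continuous, so it is enough to show that the partial sums of $\sum_{n=0}^\infty A^n\frac{f^{(n)}(z)}{n!}$ converge in the norm of $\mathcal B_{h(t)}$; norm convergence there gives convergence in $\mathcal A$, which in turn yields the weak convergence used in \eqref{weakconv}.

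First I would bound the individual terms. In the scalar case the coefficient $\frac{f^{(n)}(z)}{n!}$ is a number, so the strong-algebra estimate gives
\[
\Big\|A^n\frac{f^{(n)}(z)}{n!}\Big\|_{h(t)}\le\frac{1}{d_t}\,(d_t\|A\|_t)^n\,\Big|\frac{f^{(n)}(z)}{n!}\Big|,\qquad n\ge1,
\]
with the obvious vector-valued analogue when $p>1$. The decisive input is that $f\in\mathcal H(K)$ is entire, exactly as in the proof of the preceding Lemma: its Taylor series at $z$ has infinite radius of convergence, hence $\sum_{n=0}^\infty R^n\big|\frac{f^{(n)}(z)}{n!}\big|<\infty$ for every $R>0$. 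Choosing $R=d_t\|A\|_t$ makes the right-hand sides summable, so the series converges absolutely in $\mathcal B_{h(t)}$ and therefore in $\mathcal A$. I would stress that this step needs no smallness of $A$: the super-exponential decay of the Taylor coefficients of an entire function overwhelms the at-most-geometric growth of $\|A^n\|$ permitted by the strong-algebra axioms, and this is really the whole content of the first assertion.

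For the clause concerning \eqref{ell2ell2}, I would start from the dual estimate already derived, $|\langle a^*,A^n\rangle|\le\frac{\|a^*\|'}{d_t}(d_t\|A\|_t)^n$, and square it to get
\[
\sum_{n=0}^\infty|\langle a^*,A^n\rangle|^2\le\frac{(\|a^*\|')^2}{d_t^2}\sum_{n=0}^\infty(d_t\|A\|_t)^{2n}.
\]
The hard part is that this geometric series converges only when the ratio $d_t\|A\|_t$ is strictly less than $1$, so the bare powers of an arbitrary $A$ are not automatically square-summable — a scalar multiple $A=\lambda I$ with $|\lambda|$ large already violates it. I would close the gap with the reduction $A\mapsto A+h$ noted in the introduction: absorbing the scalar (body) part of $A$ into the base point $z$ leaves a soul term, for which the inductive family can be passed to a higher scale where the norm of that term, and hence the geometric ratio, drops below $1$. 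This last point is where the specific structure of the motivating strong algebras (the Kondratiev space and the V\aa ge inequality) is used rather than the bare axioms, and it is the step I would scrutinise most carefully; once it is in place, $X(a,A)\in\ell_2(\mathbb N_0)$ follows and the reproducing-kernel construction of Section \ref{2} applies.
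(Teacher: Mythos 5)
Your treatment of the first assertion is correct and is, in expanded form, precisely the paper's argument: the paper's entire proof is the one sentence ``This follows from Eq.~\eqref{san-diego}'', i.e.\ it rests on the geometric bound $\|A^n\|_{h(t)}\le d_t^{n-1}\|A\|_t^n$ combined (implicitly) with the super-geometric decay of the Taylor coefficients of an entire function. You have simply made explicit the two steps the authors suppress --- absolute convergence in the Banach space $\mathcal B_{h(t)}$ via $\sum_n (d_t\|A\|_t)^n\big\|\frac{f^{(n)}(z)}{n!}\big\|<\infty$, and continuity of the inclusion $\mathcal B_{h(t)}\hookrightarrow\mathcal A$ --- so on this clause you and the paper are in full agreement.

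On the ``in particular'' clause your diagnosis is right, and it should be stated bluntly: Eq.~\eqref{ell2ell2} does \emph{not} follow from Eq.~\eqref{san-diego}, and the paper offers nothing beyond that citation. The bound $|\langle a^*,A^n\rangle|\le \frac{\|a\|'}{d_t}\left(d_t\|A\|_t\right)^n$ gives square-summability only when $d_t\|A\|_t<1$, and in any unital strong algebra the claim fails outright: for $A=\lambda I$ with $|\lambda|\ge 1$ and any $a$ with $\langle a^*,I\rangle\ne 0$ one gets $|\langle a^*,A^n\rangle|=|\lambda|^n\,|\langle a^*,I\rangle|$, which is not in $\ell_2(\mathbb N_0)$. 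So the obstruction you ran into is a defect of the statement, not of your proof; what does hold for every $A$ is the convergence (hence weak convergence, as in \eqref{weakconv}) of $\sum_n \langle a^*,A^n\rangle \frac{f^{(n)}(z)}{n!}$, which is a consequence of your first part, but not membership of the sequence $(\langle a^*,A^n\rangle)_{n\ge 0}$ itself in $\ell_2(\mathbb N_0)$. Your proposed repair --- absorbing the body of $A$ into the base point $z$ and passing to a higher scale where the norm of the remaining soul term drops below $1/d_t$ --- is a sensible way to rescue the claim in Kondratiev-type algebras, where $\|A_S\|_t$ can indeed be made arbitrarily small for soul elements while the V{\aa}ge constants stay bounded; but be aware that this is entirely your addition, that it invokes structure (a body/soul decomposition and norms decreasing along the scale) absent from the strong-algebra axioms, and that it establishes \eqref{ell2ell2} for the pair $(z+h,A_S)$ rather than verbatim for $(z,A)$. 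The paper makes no such argument.
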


\begin{proof}
This follows from Eq. \eqref{san-diego}.
\end{proof}

{\bf Banach algebras:}

The case of Banach algebras is much simpler than that of strong algebras. Indeed, when $\mathcal{A}$ is a Banach algebra, it and its dual are endowed with a norm, 
denoted by $\|\cdot\|$ and $\|\cdot\|^\prime$, respectively, and we have
\[
|\langle a,A^n\rangle|\le\|a\|^\prime\cdot\|A\|^n.
\]

Then, a version of Theorem \ref{theorem} also holds in this case.

\section{General analytic kernels}
\setcounter{equation}{0}

We, now, consider the case of kernels whose domain of analyticity in $z$ and $\overline{w}$ is not necessarily the entire set $\mathbb{K}$.

\begin{theorem}
Let $K(z,w)$ be a $\mathbb K^{p\times p}$-valued kernel analytic in $z$ and $\overline{w}$ in the open set $\Omega_{\mathbb{K}}$. Then, for every $(z,w)\in\Omega_{\mathbb{K}}^2$, there exists $M_0$ (which depends on $(z,w)$) such that, for every $M\in(0,M_0)$, the infinite matrix
\begin{equation}
\frac{1}{m!n!}\frac{\partial^{n+m}K(z,w)}{\partial z^n\partial\overline{w}^m}M^{n+m}
\end{equation}
defines a bounded operator from $\ell_2(\mathbb N_0)$ into itself.
\end{theorem}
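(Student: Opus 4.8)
The plan is to follow the argument used for part $(1)$ of the entire-kernel theorem almost verbatim, the single new ingredient being that the relevant double power series now has only a finite polyradius of convergence. Fix $(z,w)\in\Omega_{\mathbb K}^2$. Since $K$ is analytic in $z$ and in $\overline{w}$ and $\Omega_{\mathbb K}$ is open, the Taylor expansion
\[
K(z+h,w+k)=\sum_{n,m=0}^\infty \mathscr{K}_{n,m}(z,w)\,h^n\,\overline{k}^{\,m}
\]
converges on a polydisk $\{|h|<r,\ |k|<r\}$ with $r>0$. I would set $M_0=r$. For a real increment $M\in(0,M_0)$, taking $h=k=M$ (so that $\overline{k}=M$) lands strictly inside this polydisk, where the series converges absolutely; this yields $\sum_{n,m}\mathscr{K}_{n,m}(z,w)M^{n+m}=K(z+M,w+M)$.

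Next I would extract geometric decay of the weighted entries. Fix $M\in(0,M_0)$ and choose an intermediate radius $M'$ with $M<M'<M_0$. Absolute convergence of the Taylor series at radius $M'$ provides a constant $C=C(z,w,M')$ such that $\|\mathscr{K}_{n,m}(z,w)\|\,(M')^{n+m}\le C$ for all $n,m$. Writing $q=M/M'\in(0,1)$, the entries of the matrix in question then satisfy
\[
\|\mathscr{K}_{n,m}(z,w)\,M^{n+m}\|\le C\,q^{\,n+m}=C\,q^n q^m .
\]
This is the one genuine deviation from the entire case: there one is free to take $M>1$ and use $1/M$ as the decay factor, whereas here $M_0$ may be small, so the geometric factor must instead be manufactured from the gap between $M$ and the radius $M_0$ via the auxiliary radius $M'$.

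With the geometric bound in hand, the conclusion follows by the same Cauchy--Schwarz estimate as before. For $u=(u_n)$ and $v=(v_m)$ in $\ell_2(\mathbb N_0,\mathbb K^p)$ one has $|u_n^*\mathscr{K}_{n,m}(z,w)M^{n+m}v_m|\le C\,\|u_n\|q^n\,\|v_m\|q^m$, and since $\sum_{n}\|u_n\|q^n\le \|u\|/\sqrt{1-q^2}$ (and similarly for $v$),
\[
\Big|\sum_{n,m=0}^\infty u_n^*\,\mathscr{K}_{n,m}(z,w)M^{n+m}\,v_m\Big|\le \frac{C}{1-q^2}\,\|u\|\cdot\|v\| .
\]
Hence the matrix with entries $\mathscr{K}_{n,m}(z,w)M^{n+m}$ defines a bounded bilinear form, i.e.\ a bounded operator on $\ell_2(\mathbb N_0,\mathbb K^p)$.

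The only step requiring care is the middle one: justifying the uniform bound and the resulting geometric decay by choosing the intermediate radius $M'$ and invoking absolute convergence of the double Taylor series on compact subsets of its open polydisk of convergence. Once that is fixed, the remainder is identical to the entire-kernel argument. Alternatively, one could bypass the Cauchy--Schwarz step entirely: absolute convergence already gives $\sum_{n,m}\|\mathscr{K}_{n,m}(z,w)\|M^{n+m}<\infty$, and since $\|u_n\|\le\|u\|$ and $\|v_m\|\le\|v\|$ this sum itself bounds the operator norm.
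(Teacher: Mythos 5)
Your proposal is correct and follows essentially the same route as the paper: use analyticity to bound the weighted coefficients $\mathscr{K}_{n,m}(z,w)$ at a radius beyond $M$, extract a geometric decay factor, and conclude with the Cauchy--Schwarz estimate exactly as in the entire case. The only difference is bookkeeping --- the paper takes $M_0$ to be a point where the expansion converges and uses the ratio $M/M_0$ directly, while you take $M_0$ to be the polyradius and obtain the coefficient bound at an intermediate radius $M'$ --- and your version is, if anything, slightly more careful on that point.
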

\begin{proof}
Let $z,w\in\Omega_{\mathbb{K}}$. Then, there exists $M_0>0$ (which depends on $z$ and $w$) for which the power expansion
\[
K(z+M_0,w+M_0)=\sum_{n,m=0}^\infty \mathscr{K}_{n,m}(z,w)M_0^{n+m}
\]
converges. Let $C$ be such that
\[
|\mathscr{K}_{n,m}(z,w)M_0^{n+m}|\le C.
\]
For $M\in(0,M_0)$ we have
\[
\begin{split}
|\eta_n^*\mathscr{K}_{n,m}(z,w)M^{n+m}\xi_m|&=|\eta_n^*\frac{M_0^n}{M^n}\mathscr{K}_{n,m}(z,w)M_0^{n+m}\xi_m
\frac{\rho^m}{\rho_0^m}|\\
&\le C\left(\|a_n\|\frac{M^n}{M_0^n}\right)\left(\|\xi_m\|\frac{M^m}{M_0^m}|\right).
\end{split}
\]
Hence,
\[
|\sum_{n,m=0}^\infty\eta_n^*\mathscr{K}_{n,m}(z,w)M^{n+m}\xi_m|\le C \|\eta\|\cdot\|\xi\|\frac{1}{1-\frac{M}{M_0}}.
\]
\end{proof}

As a consequence, we have the following proposition, where
\begin{equation}
e(z)
\begin{pmatrix}I\\ \overline{z}I\\ \overline{z}^2I\\ \overline{z}^3I\\ \vdots\end{pmatrix},\quad z\in\mathbb K.
\label{extkern321}
\end{equation}

\begin{proposition}
Let $D(M)$ denotes the diagonal operator with diagonal equals to $(I,M,M^2,M^3,\cdots)$.
Thus,
\begin{equation}
\langle D(M)\mathscr K(z+h,w+k)D(M)\xi,\eta\rangle=\langle \mathscr K(z,w)e(k)\xi,e(h)\eta\rangle.
\end{equation}
\end{proposition}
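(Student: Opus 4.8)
The plan is to reduce the identity to pure power-series bookkeeping, reading both sides as two expansions of the same analytic object: the right-hand side is $K$ expanded about $(z,w)$ and paired against the generating columns $e(h),e(k)$, while the left-hand side is $K$ re-expanded about the shifted point $(z+h,w+k)$, with the diagonal operator $D(M)$ present only to keep every series inside a common region of convergence. First I would record the analytic input. Since $K$ is analytic in $z$ and in $\overline w$ and $\mathscr K_{n,m}(z,w)=\frac{1}{n!m!}\frac{\partial^{n+m}K(z,w)}{\partial z^n\partial\overline w^m}$, the double Taylor expansion
\[
K(z+h,w+k)=\sum_{n,m=0}^\infty \mathscr K_{n,m}(z,w)\,h^n\,\overline k^{\,m}
\]
holds for $|h|,|k|$ small, and re-expanding its left-hand side about $(z+h,w+k)$ and comparing coefficients gives the binomial shift relation
\[
\mathscr K_{n,m}(z+h,w+k)=\sum_{a\ge n,\ b\ge m}\binom{a}{n}\binom{b}{m}h^{a-n}\,\overline k^{\,b-m}\,\mathscr K_{a,b}(z,w),
\]
which carries all the algebraic content of the statement.

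Next I would compute the right-hand side directly. Reading $e(z)$ from \eqref{extkern321} as the column with $m$-th block $\overline z^{\,m}I$ and using $(\overline h^{\,n}\eta)^*=\eta^*h^n$, the pairing expands over the standard basis of $\ell_2(\mathbb N_0,\mathbb K^p)$ as
\[
\langle \mathscr K(z,w)e(k)\xi,\,e(h)\eta\rangle=\sum_{n,m=0}^\infty \eta^*\,h^n\,\mathscr K_{n,m}(z,w)\,\overline k^{\,m}\,\xi=\eta^*K(z+h,w+k)\,\xi,
\]
the last step being the Taylor expansion above. For the left-hand side, since $[D(M)]_{nn}=M^nI$ the operator $D(M)\mathscr K(z+h,w+k)D(M)$ carries the factor $M^{n+m}$ into its $(n,m)$ block, which is precisely the weight that the previous theorem on general analytic kernels requires in order for $\mathscr K(z+h,w+k)$ to act boundedly; expanding this side by the binomial shift relation and resumming then returns the same closed form, establishing the identity.

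The hard part will not be algebraic but analytic: the generating columns $e(z)\xi$ need not belong to $\ell_2(\mathbb N_0,\mathbb K^p)$, and both the expansion of the pairing and the use of the binomial shift relation require interchanging the order of a doubly-infinite sum. This is exactly the difficulty controlled by the preceding theorem, whose bound $|\mathscr K_{n,m}(z,w)M_0^{n+m}|\le C$ yields, for $M\in(0,M_0)$, the absolutely convergent estimate $\big|\sum_{n,m}\eta_n^*\mathscr K_{n,m}(z,w)M^{n+m}\xi_m\big|\le C\,\|\eta\|\,\|\xi\|\,(1-M/M_0)^{-1}$. Once this absolute convergence is secured, Fubini's theorem legitimizes each rearrangement and the two computations coincide, which completes the argument.
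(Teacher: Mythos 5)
Your right-hand-side computation is correct (with $\xi,\eta\in\mathbb K^p$, pairing $\mathscr K(z,w)$ against the geometric columns does give $\eta^*K(z+h,w+k)\xi$), and so is your binomial shift relation for $\mathscr K_{n,m}(z+h,w+k)$. The gap is the single sentence that carries the whole proof: ``expanding this side by the binomial shift relation and resumming then returns the same closed form.'' It does not, and you never resolve the type mismatch that this step hides: on the left, $\xi$ and $\eta$ must be elements of $\ell_2(\mathbb N_0,\mathbb K^p)$ for $D(M)$ and $\mathscr K(z+h,w+k)$ to act on them, while on the right, $e(k)\xi$ and $e(h)\eta$ only make sense for $\xi,\eta\in\mathbb K^p$. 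If you actually perform the resummation for sequences $\xi=(\xi_m)$, $\eta=(\eta_n)$, your own shift relation yields
\begin{equation*}
\langle D(M)\mathscr K(z+h,w+k)D(M)\xi,\eta\rangle
=\sum_{a,b}\Bigl(\sum_{n\le a}\tbinom{a}{n}M^n\,\overline h^{\,a-n}\eta_n\Bigr)^{*}\mathscr K_{a,b}(z,w)\Bigl(\sum_{m\le b}\tbinom{b}{m}M^m\,\overline k^{\,b-m}\xi_m\Bigr),
\end{equation*}
that is, $\mathscr K(z,w)$ paired against lower-triangular Pascal-type columns, not against the geometric columns $e(k)\xi$, $e(h)\eta$. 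The two expressions coincide only when $\xi$ and $\eta$ are supported in the zeroth coordinate, and in that case $D(M)$ acts as the identity, so the claimed identity collapses to $\eta_0^*K(z+h,w+k)\xi_0=\eta_0^*K(z+h,w+k)\xi_0$, with $M$ playing no role at all.

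That this is not a convergence issue, fixable by Fubini, is shown by a two-line counterexample: take $p=1$, $K(z,w)=z\overline w$, and $\xi=\eta=(1,1,0,0,\dots)$. Then the left-hand side equals $(z+h+M)\,\overline{(w+k+M)}$, which depends on $M$, whereas the right-hand side, under any fixed reading of $e(h)$ and $e(k)$, contains no $M$. So the analytic estimates you invoke from the preceding theorem (which you use correctly) control rearrangement but cannot rescue the algebra. For fairness, note that the paper itself states this proposition without proof, as an immediate ``consequence'' of the preceding theorem, and the statement as printed is ambiguous for exactly the reason your argument stumbles: a correct proof must first fix an interpretation under which both sides are defined on the same arguments (for instance $\xi,\eta\in\mathbb K^p$ embedded in the zeroth slot of $\ell_2(\mathbb N_0,\mathbb K^p)$, where the identity is true but trivial, or a reformulation in which $e(h)$, $e(k)$ are replaced by the Pascal-type operators above, which is the identity your computation actually proves).
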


The new positive definite kernel is, then,
\begin{equation}
\langle D(M)\mathscr K(z,w)D(M)X(b,B),X(a,A)\rangle_{\ell_2(\mathbb N_0)}.
\end{equation}

\section{Operators}
\setcounter{equation}{0}
Let $K$ be an analytic kernel, and $T$ in $\mathbf B(\mathcal H(K))$. Then, $T$ has a natural extension to an operator from $\mathcal H(\widetilde{K})$ into itself
via the formula
\begin{equation}
\widetilde{T}(J(f))=J(Tf)
\end{equation}

By definition of the norm in $\mathbf B(\mathcal H(K))$, the operator $\widetilde{T}$ is bounded if and only if $T$ is bounded. Furthermore we have the following result, the proof of which we omit.

\begin{proposition} Let $T$ and $S$ be possibly unbounded operators in $\mathcal H(K)$. Then,
\begin{eqnarray}
(\widetilde{TS})(J(f))&=&\widetilde{T}(\widetilde{S}J(f)),\quad f\in{\rm Dom}\, S\,\, such \,\, that,\,\, Sf\in{\rm Dom}\,T, \\
\widetilde{T^*}(J(f))&=&\widetilde{T}^*J(f),\quad f\in{\rm Dom}\, T.
\end{eqnarray}
\end{proposition}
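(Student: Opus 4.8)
The plan is to exploit the fact, supplied by the theorem identifying the reproducing kernel Hilbert space of $\mathscr K$ with the jet functions, that the jet map $J\colon f\mapsto J(f)$ is an \emph{isometric isomorphism} from $\mathcal H(K)$ onto the jet space $\mathcal H(\widetilde K)$. That theorem gives $\langle J(f),J(g)\rangle_{\mathcal H(\widetilde K)}=\langle f,g\rangle_{\mathcal H(K)}$ for all $f,g\in\mathcal H(K)$, so $J$ is an isometry, and its range is by construction all of $\mathcal H(\widetilde K)$. Hence $J$ is unitary, $J^{-1}=J^*$, and the defining relation $\widetilde T(J(f))=J(Tf)$ reads $\widetilde T=JTJ^{-1}$: the extension $\widetilde T$ is nothing but the unitary conjugate of $T$, with domain $\mathrm{Dom}(\widetilde T)=J\bigl(\mathrm{Dom}(T)\bigr)$. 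The whole statement thus reduces to transporting the multiplicative and adjoint structure of (possibly unbounded) operators on $\mathcal H(K)$ across this single unitary.

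For the composition formula I would simply unwind the definition twice, watching the domains. For $f\in\mathrm{Dom}(S)$ with $Sf\in\mathrm{Dom}(T)$,
\[
\widetilde T\bigl(\widetilde S\,J(f)\bigr)=\widetilde T\bigl(J(Sf)\bigr)=J\bigl(T(Sf)\bigr)=J\bigl((TS)f\bigr)=(\widetilde{TS})(J(f)).
\]
The point to verify is that the domain hypotheses match on both sides: applying $\widetilde S$ to $J(f)$ is legitimate exactly when $f\in\mathrm{Dom}(S)$, and the subsequent application of $\widetilde T$ requires $Sf\in\mathrm{Dom}(T)$; these are precisely the conditions defining $\mathrm{Dom}(TS)$, so that $J(f)\in\mathrm{Dom}(\widetilde{TS})$ as well and both sides are simultaneously defined.

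For the adjoint formula I would invoke the standard fact that conjugation by a unitary preserves adjoints \emph{together with their domains}: if $U$ is unitary and $T$ is densely defined, then $(UTU^{-1})^*=UT^*U^{-1}$, with equality of domains. Applying this with $U=J$ to $\widetilde T=JTJ^{-1}$ yields $(\widetilde T)^*=JT^*J^{-1}=\widetilde{T^*}$, and evaluating on a jet function gives $(\widetilde T)^*J(f)=J(T^*f)=\widetilde{T^*}(J(f))$ for $f$ in the relevant domain, namely $f\in\mathrm{Dom}(T^*)$, equivalently $J(f)\in\mathrm{Dom}\bigl((\widetilde T)^*\bigr)=J\bigl(\mathrm{Dom}(T^*)\bigr)$. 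The main obstacle, and the only place where genuine care is needed, is the bookkeeping of domains in the unbounded case: one must check that $J$ carries $\mathrm{Dom}(T)$ bijectively onto $\mathrm{Dom}(\widetilde T)$ and that the domain of $(\widetilde T)^*$ is exactly $J\bigl(\mathrm{Dom}(T^*)\bigr)$. Once the unitarity of $J$ is secured these are routine consequences of the definition of the adjoint of an unbounded operator, but they are exactly what the proposition asserts beyond the already-easy bounded case.
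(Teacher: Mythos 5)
The paper explicitly omits the proof of this proposition (``the proof of which we omit''), so there is no argument of the authors' to compare yours against; what you have written fills that gap, and it is correct. Your key observation --- that the theorem identifying $\mathcal H(\mathscr K)$ with the space of jet functions, together with $\langle J(f),J(g)\rangle_{\mathcal H(\mathscr{K})}=\langle f,g\rangle_{\mathcal H(K)}$, makes $J$ a surjective isometry and hence unitary, so that $\widetilde T=JTJ^{-1}$ with $\mathrm{Dom}(\widetilde T)=J(\mathrm{Dom}(T))$ --- is exactly the right mechanism: it reduces the composition identity to a two-line unwinding of the definition and the adjoint identity to the standard fact that conjugation by a unitary commutes with taking adjoints of densely defined operators, domains included. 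Two points you handle well are worth making explicit if this were written up: first, $T$ (and $S$, for the composition statement to force matching domains) must be densely defined for $T^*$ to exist at all, a hypothesis the proposition leaves tacit; second, the natural domain for the adjoint identity is $f\in\mathrm{Dom}(T^*)$, not $f\in\mathrm{Dom}(T)$ as printed in the statement --- the left-hand side $\widetilde{T^*}(J(f))$ is only defined there, so the paper's domain condition appears to be a slip, and your version is the correct one. The only caveat is that the unitarity of $J$ is established in the paper for kernels entire in $z$ and $\overline{w}$, whereas Section 5 speaks of general analytic kernels; but since the proposition itself presupposes that $\mathcal H(\widetilde K)$ and $J$ make sense, your proof operates in precisely the setting where the statement does.
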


In general, the case of unbounded operators is of interest, as the
Fock space example shows. Then, if
\begin{equation}
Z\equiv\begin{pmatrix}0&0&0&0&\cdots\\
1&0&0&0&\cdots\\
0&1&0&0&\cdots\\
\vdots&&&&\cdots
\end{pmatrix}
\end{equation}
and
\begin{equation}
S\equiv\begin{pmatrix}
0&1&0&0&0&\cdots\\
0&0&2&0&0&\cdots\\
0&0&0&3&0&\cdots\\
\vdots&&&&&\cdots
\end{pmatrix},
\end{equation}
we have
\begin{equation}
\widetilde{M_z}(J(f))=(zI+Z)J(f)
\end{equation}
and
\begin{equation}
\widetilde{\partial_z}(J(f))=SJ(f).
\end{equation}

Next, letting $h\in\mathcal G$ as in Eq. \eqref{blabla} and $\Gamma_n(z)h=\frac{f^{(n)}(z)}{n!}$, we extend the operators to the space of $\mathbb K$-valued functions with reproducing kernel given by Eq. \eqref{missouri} as
\begin{equation}
T_{\mathcal A}\left(\sum_{n=0}^\infty \langle a^*,A^n\rangle\Gamma_n(z)h\right)=\sum_{n=0}^\infty \langle a^*,A^n\rangle\Gamma_n(z)Th.
\end{equation}
When $\mathcal G=\mathcal H(K)$ in the factorization given by Eq. \eqref{blabla}, we have
\begin{equation}
T_{\mathcal A}\left(\sum_{n=0}^\infty \langle a^*,A^n\rangle\Gamma_n(z)h\right)=\sum_{n=0}^\infty \langle a^*,A^n\rangle\frac{(Th)^{(n)}(z)}{n!}
\end{equation}
or, equivalently,
\begin{equation}
\langle a^*, (Tf)(z+A)\rangle=\sum_{n=0}^\infty \langle a^*,A^n\rangle\frac{(Th)^{(n)}(z)}{n!}.
\end{equation}

\begin{proposition}
Let $T,S$ be two possibly unbounded linear operators from $\mathcal H(K)$ into itself.
Then,
\begin{eqnarray}
(TS)_{\mathcal A}&=&(T)_{\mathcal A}(S)_{\mathcal A},\\
(T_{\mathcal A})^*&=&(T^*)_{\mathcal A}.
\end{eqnarray}
\end{proposition}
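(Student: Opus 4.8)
The plan is to realize the assignment $T\mapsto T_{\mathcal A}$ as conjugation by a single fixed unitary operator, after which both identities reduce to the standard algebraic rules for such conjugation, the only genuine work being the domain bookkeeping forced by the unbounded case. Take $\mathcal G=\mathcal H(K)$ in the factorization \eqref{blabla}, and let
\[
V:\;h\;\longmapsto\;\Big((z,A,a)\mapsto\sum_{n=0}^\infty\langle a^*,A^n\rangle\,\Gamma_n(z)h\Big).
\]
By the proposition following \eqref{FzAa}, $V$ maps $\mathcal H(K)$ onto the reproducing kernel Hilbert space $\mathcal H_{\mathcal A}$ with kernel \eqref{missouri}, and the norm on $\mathcal H_{\mathcal A}$ is by construction the one induced from $\mathcal H(K)$; minimality of the factorization makes $V$ injective, so $V$ is a unitary isomorphism with $V^{*}=V^{-1}$. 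The defining formula for $T_{\mathcal A}$ is precisely $T_{\mathcal A}(Vh)=V(Th)$, that is, $T_{\mathcal A}V=VT$ on $V(\mathrm{Dom}\,T)$, equivalently $T_{\mathcal A}=VTV^{*}$.

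First I would settle the multiplicativity. For $h\in\mathrm{Dom}\,S$ with $Sh\in\mathrm{Dom}\,T$, applying the defining formula twice gives
\[
(T)_{\mathcal A}(S)_{\mathcal A}(Vh)=(T)_{\mathcal A}\bigl(V(Sh)\bigr)=V\bigl(T(Sh)\bigr)=V\bigl((TS)h\bigr)=(TS)_{\mathcal A}(Vh).
\]
The corresponding domains agree: $V(\mathrm{Dom}\,TS)$ is exactly the set of $Vh$ for which $V(Sh)$ lies in $\mathrm{Dom}\,(T)_{\mathcal A}$, so the two operators coincide as (possibly unbounded) operators and not merely on a common core. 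This is the same domain matching already recorded for $\widetilde{T}$ in the preceding proposition, transported verbatim through $V$.

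For the adjoint relation, the plan is to invoke the elementary fact that conjugation by a unitary commutes with the adjoint of a densely defined operator: since $V$ is unitary, $(VTV^{*})^{*}=VT^{*}V^{*}$ with domain $V(\mathrm{Dom}\,T^{*})$. Substituting $T_{\mathcal A}=VTV^{*}$ yields $(T_{\mathcal A})^{*}=VT^{*}V^{*}=(T^{*})_{\mathcal A}$, as claimed.

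I expect the main obstacle to be the unbounded-operator bookkeeping concealed in this last step, namely verifying that $(VTV^{*})^{*}=VT^{*}V^{*}$ holds with equality of domains rather than as a one-sided inclusion. Here I would argue directly: $g\in\mathrm{Dom}\bigl((T_{\mathcal A})^{*}\bigr)$ means that $\langle T_{\mathcal A}F,g\rangle_{\mathcal H_{\mathcal A}}=\langle F,k\rangle_{\mathcal H_{\mathcal A}}$ for some $k$ and all $F\in\mathrm{Dom}\,T_{\mathcal A}$; writing $F=Vh$ with $h\in\mathrm{Dom}\,T$ and using that $V$ is unitary, this reads $\langle Th,V^{*}g\rangle_{\mathcal H(K)}=\langle h,V^{*}k\rangle_{\mathcal H(K)}$ for all $h\in\mathrm{Dom}\,T$. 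The latter says exactly that $V^{*}g\in\mathrm{Dom}\,T^{*}$ with $T^{*}V^{*}g=V^{*}k$, that is $g\in V(\mathrm{Dom}\,T^{*})$ and $k=VT^{*}V^{*}g$. Once this equivalence is in hand, both displayed identities follow.
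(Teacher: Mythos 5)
The paper states this proposition without proof (exactly as it does for the companion result on $\widetilde{T}$, whose proof the authors explicitly omit), so there is no argument in the paper to compare yours against; what you have written is the missing proof. Your route --- realizing the assignment $T\mapsto T_{\mathcal A}$ as conjugation $T_{\mathcal A}=VTV^{*}$ by the map $V\colon h\mapsto\sum_{n}\langle a^{*},A^{n}\rangle\Gamma_{n}(z)h$, and then reducing both identities to the standard rules for unitary conjugation, with a direct two-sided domain computation for the adjoint --- is sound, and it is surely the argument the authors intend, since their defining formula for $T_{\mathcal A}$ is literally $T_{\mathcal A}V=VT$ on $V(\mathrm{Dom}\,T)$. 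The multiplicativity computation, the matching of domains for $(TS)_{\mathcal A}$ and $T_{\mathcal A}S_{\mathcal A}$, and the equivalence characterizing $\mathrm{Dom}\bigl((T_{\mathcal A})^{*}\bigr)$ are all correct as written.

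Two points deserve more care than your one-line justifications. First, for $T^{*}$ and $(T_{\mathcal A})^{*}$ to exist at all, $T$ must be densely defined; this hypothesis should be stated (it is only implicit in the proposition). Second, and more substantively, ``minimality of the factorization makes $V$ injective'' is too quick: $Vh=0$ means $\langle \Gamma(z)h,\,X(a,\mathbf A)\rangle_{\ell_2(\mathbb N_0)}=0$ for all admissible triples, and in Section 5 the admissible sequences are only the power sequences $\mathbf A=(A^{n})_{n=0}^{\infty}$, so besides minimality you need the vectors $X(a,(A^{n}))$ to span a dense subspace of $\ell_{2}(\mathbb N_{0})$. This does hold --- e.g.\ taking $A=cI$ with $|c|<1$ and $a$ with $\langle a,I\rangle\neq 0$ (Hahn--Banach) produces the geometric vectors $\langle a,I\rangle\,(1,\overline{c},\overline{c}^{2},\dots)$, whose span is dense --- but it is a property of the pairing between $\mathcal A$ and $\mathcal A^{\prime}$, not of the factorization \eqref{blabla}. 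Note, however, that this injectivity is not a defect peculiar to your proof: without it the paper's own defining formula for $T_{\mathcal A}$ would be ambiguous (one would need $\ker V$ to be $T$-invariant), so it is in effect a standing hypothesis of the section, and once it is granted your argument is complete.
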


Finally, we note that the operators $T,\widetilde{T}$ and $T_{\mathcal A}$ are related by
\begin{equation}
\begin{split}    
T_{\mathcal A}\left(\sum_{n=0}^\infty\langle a^*,A^n\rangle \Gamma_nh\right)=\langle \widetilde{T}J(f),X(a,(A^{n})_{n=0}^\infty)\rangle_{\ell_2(\mathbb N_0)}.
\end{split}  
\end{equation}
Indeed, we have
\[
\begin{split}
T_{\mathcal A}\left(\sum_{n=0}^\infty\langle a^*,A^n\rangle \Gamma_nh\right)&=\langle J(Tf),X(a,(A^{n})_{n=0}^\infty)\rangle_{\ell_2(\mathbb N_0)}\\
&=\langle \widetilde{T}(f),X(a,(A^{n})_{n=0}^\infty)\rangle_{\ell_2(\mathbb N_0)}.
\end{split}
\]


\bibliographystyle{plain}
\bibliography{all}
\end{document}